\documentclass[runningheads]{llncs}
\usepackage{graphicx}
\usepackage{epsfig}
\usepackage{graphics}
\usepackage{array}
\usepackage{amsmath}
\usepackage{algorithm}
\usepackage{algorithmic}
\usepackage{comment}
\usepackage{tikz}
\usepackage{epstopdf}
\usepackage{fullpage}
\date{}


\title{Tri-connectivity Augmentation in Trees}
\author{S.Dhanalakshmi, N.Sadagopan, D.Sunil Kumar} 
\institute{Indian Institute of Information Technology, Design and Manufacturing, Kancheepuram, Chennai, India. \\
\email{\{mat12d001,sadagopan\}@iiitdm.ac.in}}
\begin{document}
\maketitle
\begin{abstract}
For a connected graph, a {\em minimum vertex separator} is a minimum set of vertices whose removal creates at least two connected components.  The vertex connectivity of the graph refers to the size of the minimum vertex separator and a graph is $k$-vertex connected if its vertex connectivity is $k$, $k\geq 1$.  Given a $k$-vertex connected graph $G$, the combinatorial problem {\em vertex connectivity augmentation} asks for a minimum number of edges whose augmentation to $G$ makes the resulting graph $(k+1)$-vertex connected.  In this paper, we initiate the study of $r$-vertex connectivity augmentation whose objective is to find a $(k+r)$-vertex connected graph by augmenting a minimum number of edges to a $k$-vertex connected graph, $r \geq 1$.  We shall investigate this question for the special case when $G$ is a tree and $r=2$.  In particular, we present a polynomial-time algorithm to find a minimum set of edges whose augmentation to a tree makes it 3-vertex connected.  Using lower bound arguments, we show that any tri-vertex connectivity augmentation of trees requires at least $\lceil \frac {2l_1+l_2}{2} \rceil$ edges, where $l_1$ and $l_2$ denote the number of degree one vertices and degree two vertices, respectively. Further, we establish that our algorithm indeed augments this number, thus yielding an optimum algorithm.
\end{abstract}
\section{Introduction}
The study of vertex separators and associated combinatorial problems has been a fascinating research in the field of combinatorial computing.  One such classical problem, namely {\em connectivity augmentation} focuses on increasing the vertex connectivity by one by augmenting a minimum number of edges.  This study was initiated by Eswaran et al  \cite{tarjan} and they studied the fundamental problem {\em bi-connectivity augmentation}: given a 1-connected graph $G$, find a minimum number of edges whose augmentation to $G$ makes it 2-vertex connected (bi-vertex connected).  Subsequently, Hsu \cite{Watanabe,tri} studied the tri-connectivity augmentation of bi-vertex connected graphs, making a bi-vertex connected graph 3-vertex connected by augmenting a minimum number of edges.  The complexity of general vertex connectivity augmentation, i.e., given a $k$-vertex connected graph, find a minimum number of edges whose augmentation to the given graph makes it $(k+1)$-vertex connected, was settled by Vegh \cite{vegh} and this result has been a breakthrough result as the complexity of which was open for al most three decades.  \\
There are some note-worthy results as far as bi (tri)-connectivity augmentation problems are concerned.  To solve bi-connectivity augmentation, in \cite{vijaya,tarjan}, the given 1-vertex connected graph is transformed into a {\em block tree} and the augmentation is done with the help of the block tree.  In \cite{nsn}, a bi-connected component tree transforms the 1-connected graph and helps in obtaining a biconnectivity augmentation set.  Interestingly, in both the approaches a minimum bi-vertex connectivity augmentation can be obtained with the help of proposed tree-like graphs.   Recent work due to Surabhi et. al \cite{surabi} developed a strategy using which one can augment many edges in parallel, thus obtaining a simpler approach for sequential and parallel bi-vertex connectivity augmentation.  However, their work is restricted to the class of trees which are 1-vertex connected.  Similarly, for tri-connectivity augmentation of 2-vertex connected graphs, a 3-block tree of 2-vertex connected graphs was used to obtain a minimum tri-connectivity augmentation set \cite{vijaya,nsn}. \\
In this article, we initiate the study of $r$-vertex connectivity augmentation which is to find a $(k+r)$-vertex connected graph from a $k$-vertex connected graph by augmenting a minimum number of edges.  Having known the results of \cite{vegh}, it is natural to know whether iterative application of algorithm mentioned in \cite{vegh} $r$ times will yield a $(k+r)$-vertex connected graph.  It is important to highlight the fact that even for tri-connectivity augmentation of 1-connected graphs the chain approach fails to produce a minimum connectivity augmentation set, i.e, the approach of making a 1-vertex connected graph 2-vertex connected and making a 2-vertex connected graph 3-vertex connected does not yield an optimum augmentation set.  This calls for a good understanding of minimum vertex separators of $k$-vertex connected graphs and their role in $r$-vertex connectivity augmentation.  Towards this attempt, we shall explore the study of tri-connectivity augmentation in trees.  In particular, we present the following results in this paper;
\begin{itemize}
\item Given a tree $T$, any minimum tri-connectivity augmentation set has at least $\lceil \frac {2 l_1 + l_2}{2} \rceil$ edges, where $l_1$ and $l_2$ denote the number of degree one vertices and degree two vertices, respectively.
\item A polynomial-time algorithm to compute a minimum tri-connectivity augmentation set meeting the above bound.
\end{itemize}
We believe that the results presented in this paper can be extended to tri-connectivity augmentation of 1-connected graphs.\\
{\bf Roadmap:} In the next section, we present lower bound results for tri-connectivity augmentation followed by an algorithm which will yield the minimum tri-connectivity augmentation set in polynomial time.  We conclude this paper with some directions for $r$-connectivity augmentation of 1-connected graphs, $r \geq 3$. 
\subsection{Connectivity Augmentation Preliminaries}
Notation and definitions are as per \cite{West,Golumbic,cormen}.  Let $G =(V,E)$ be an undirected connected graph where $V(G)$ is the set of vertices and $E(G) \subseteq \{\{u,v\}~|~ u,v \in V(G)$, $u \not= v \}$. For $v \in V(G)$, $N_G(v)=\{u~|~ \{u,v\} \in E(G)\}$ and $d_G(v)=|N_G(v)|$ refers to the degree of $v$ in $G$. Let $v \in V(G)$ is said to be a \emph{leaf} if $d_G(v) = 1$. $\delta(G)$ and $\Delta(G)$ refers to the minimum and maximum degree of $G$, respectively.  For simplicity, we use $\delta$ and $\Delta$ when the associated graph is clear from the context. $P_{uv} = (u=u_1, u_2, \ldots, u_k=v)$ is a \emph{path} defined on $V(P_{uv})=\{u=u_1, u_2, \ldots, u_k=v\}$ such that $E(P_{uv}) = \{\{u_i, u_{i+1}\}\vert \{u_i, u_{i+1}\} \in E(G), 1 \leq i \leq k-1\}$. For a graph $G$, we define $D_1 = \{v \mid d_G(v) = 1\}$ such that $l_1 = \vert D_1\vert$ refers to the number of vertices in $D_1$ and we define $D_2 = \{v \mid d_G(v) = 2\}$ such that $l_2 = \vert D_2\vert$ refers to the number of vertices in $D_2$. For $S \subset V(G)$, $G[S]$ denotes the graph induced on the set $S$ and $G \setminus S$ is the induced graph on the vertex set $V(G) \setminus S$.  A \emph{vertex separator} of a graph $G$ is a set $S \subseteq V(G)$ such that $G \setminus S$ has more than one connected component.  A vertex separator $S$ is said to be \emph{minimal} if there no proper subset $S'$ of $S$ such that $S'$ is a vertex separator. A \emph{minimum vertex separator} $S$ is a vertex separator  of least size and the cardinality of such $S$ is the vertex connectivity of a graph $G$, written  $\kappa(G)$.  A graph is $k$-\emph{vertex connected} if $\kappa(G)=k$.  If $\kappa(G)=1$ then the graph is 1-connected (also known as singly connected) and in such a graph a minimum vertex separator $S$ is a singleton set and the vertex $v \in S$ is a {\em cut-vertex} of $G$.  A \emph{cycle} is a connected graph in which the degree of each vertex is two.  A \emph{tree} is a connected and an acyclic graph.  For a graph $G$ with $\kappa(G)=k$, a minimum connectivity augmentation set $E_{ca} = \{\{u,v\} ~|~ u,v \in V(G) \mbox{ and } \{u,v\} \notin E(G) \}$ is such that the graph obtained from $G$ by augmenting $E_{ca}$ is of vertex connectivity $k+r$, $r \geq 2$. This paper is written in the context of augmenting $E_{ca}$ edges to a tree such that the obtained graph is $3$-connected.  
\section{Tri-connectivity Augmentation in Trees}
In this section, we shall first present the lower bound analysis which is a number representing the number of edges to be augmented in any minimum connectivity augmentation set to make a tree 3-vertex connected.  In the subsequent sections, we first give an sketch of the algorithm and then we shall present an algorithm with analysis which will output a connectivity augmentation set meeting the lower bound. Our approach finds a minimum tri-connectivity augmentation set for trees with $\Delta(G) \leq 2$ (which are called paths) and for trees with $\Delta(G) \geq 3$ (called non-path trees) separately.


\begin{lemma}
Let $T$ be a tree and $l_1$ and $l_2$ denote the number of degree one and degree two vertices, respectively. Then, any tri-connectivity augmentation set $E_{ca}$ is such that $\vert E_{ca} \vert \geq \lceil \frac{2l_1+l_2}{2} \rceil$.
\end{lemma}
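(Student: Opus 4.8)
The plan is to reduce the lower bound to a pure degree‑counting argument, using only the elementary fact that every vertex of a $3$‑vertex connected graph has degree at least $3$. Let $H$ be the graph obtained from $T$ by augmenting the edge set $E_{ca}$, and suppose $H$ is $3$‑vertex connected. For a vertex $v$, write $a(v)$ for the number of edges of $E_{ca}$ incident with $v$, so that $d_H(v) = d_T(v) + a(v)$ (here I would note that since $E_{ca}$ consists of non‑edges of the simple graph $T$, no multi‑edges are created and $a(v)$ is exactly the number of new neighbours of $v$).

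Next I would record the two local constraints coming from $\delta(H) \ge 3$: for every $v \in D_1$ we have $d_T(v)=1$, hence $a(v) \ge 2$; and for every $v \in D_2$ we have $d_T(v)=2$, hence $a(v) \ge 1$. Since $D_1$ and $D_2$ are disjoint by definition, summing these inequalities gives
\begin{equation}
\sum_{v \in V(T)} a(v) \;\ge\; \sum_{v \in D_1} 2 \;+\; \sum_{v \in D_2} 1 \;=\; 2 l_1 + l_2 .
\end{equation}

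Then I would apply the handshaking identity to the edge set $E_{ca}$ alone: each edge of $E_{ca}$ has exactly two endpoints, so $\sum_{v \in V(T)} a(v) = 2\,\vert E_{ca}\vert$. Combining with the previous display yields $2\,\vert E_{ca}\vert \ge 2 l_1 + l_2$, i.e. $\vert E_{ca}\vert \ge \tfrac{2 l_1 + l_2}{2}$, and since $\vert E_{ca}\vert$ is an integer we may round up to obtain $\vert E_{ca}\vert \ge \lceil \tfrac{2 l_1 + l_2}{2}\rceil$, as claimed.

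Conceptually there is no serious obstacle here: the argument is just ``each augmented edge can cancel at most two units of degree deficiency, and the total deficiency forced by the $3$‑connectivity requirement is $2l_1+l_2$.'' The only points that merit a sentence of care are that $E_{ca}$ genuinely adds distinct new edges (so the handshaking count is tight) and that $D_1 \cap D_2 = \emptyset$ (so the two families of constraints are summed without double counting). I would also remark that this bound uses only the minimum‑degree necessary condition for $3$‑connectivity, which is why the subsequent sections must still exhibit an augmentation set that actually achieves it — the matching upper bound is the substantive part of the paper, not this lemma.
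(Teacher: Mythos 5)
Your proposal is correct and follows essentially the same route as the paper's proof: both rely on the fact that $\delta(H)\ge 3$ forces a total degree deficiency of $2l_1+l_2$ on $D_1\cup D_2$, and that each augmented edge can reduce this deficiency by at most two. Your version simply makes the handshaking count and the integrality rounding explicit, which the paper leaves implicit.
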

\begin{proof}
It is well-known that for any $3$-connected graph $G$, $\delta(G) \geq 3$. Therefore to make $T$ a $3$-connected graph, we must increase the degree of elements in $D_1$ by at least two and the degree of elements in $D_2$ by at least one. Since an edge joins a pair of vertices, any augmentation set $E_{ca}$ has at least $\lceil \frac{2l_1+l_2}{2} \rceil$ edges. This completes the proof of the lemma. $\hfill \qed$ 
\end{proof}

\subsection{Outline of the Algorithm}
Our approach varies for the path and the non-path, for the input tree on $n$ vertices. If the input is a path, the algorithm converts the path to a cycle and then augments edges in such a way that every edge creates a cycle of length $\lfloor \frac{n}{2}\rfloor + 1$. If the input is a non-path tree, $T$: First root the tree $T$ at the maximum degree vertex, $r$. Let $\{v_1, v_2, \ldots, v_l\}$ denotes the set of leaves in $T$. Now, we group the vertex set into branches, namely $B_i$. The branch $B_i$ contains the vertices in the path from the root $r$ to the leaf $v_i$. Thus, the number of branches in the input tree is the number of degree one vertices. Next we perform a level ordering and label the degree two vertices as per the ordering as $w_1, w_2, \ldots, w_k$. Let $W = (w_1, w_2, \ldots, w_k)$. Now we initialize every vertex in $W$ as unmarked. As we iterate, we augment edges as follows: for every unmarked vertex $y \in W$ find the least unmarked vertex $x$ of different branch and if such $x$ exists for $y$, mark the vertices $x$ and $y$ and augment an edge between $x$ and $y$. Once this process is done, group the unmarked vertices in $W$. If there are no unmarked vertices then we form a cycle among the degree one vertices, if there are odd number of unmarked vertices we form a cycle among the degree one vertices and then we augment edges between the remaining degree two vertices using the ordering of vertices. If there are even number of unmarked vertices we form a path among the leaves and then we augment edges between the remaining degree two vertices using the ordering of vertices. Interestingly, this new approach guarantees that the algorithm augments exactly $\lceil \frac{2l_1+l_2}{2} \rceil$ edges.

\subsection{The Algorithm}
We now present an algorithm for tri-connectivity augmentation of trees.  Further, we show that our algorithm is optimal followed by the proof of correctness. 
  
\begin{algorithm}[H]
\caption{Tri-connectivity Augmentation of a Tree}
\label{tri}
\begin{algorithmic}[l]
\STATE{\textbf{Input}: Tree $T$}
\STATE{\textbf{Output}: Tri-vertex connected graph $H$.}
\IF{$T$ is a path}
\STATE{$Path\_Augmentation(T)$}
\ELSE
\STATE{$Non$-$path\_Augmentation(T)$}
\ENDIF
\STATE{Output $H$}
\end{algorithmic}
\end{algorithm}

\begin{algorithm}[H]
\caption{Tri-connectivity Augmentation in path like trees: \em{Path\_Augmentation(Tree $T$)}}
\label{ordering}
\begin{algorithmic}[1] 
\STATE{Let $P_{n}=(v_1, v_2, \dots, v_n)$ denotes an ordering of vertices of $T$ such that for all $1 \leq i \leq n-1$, $v_i$ is adjacent to $v_{i+1}$.}
\STATE{Augment the edge $\{v_{1},v_{n}\}$ to $T$ and update $E_{ca}$. \tt /* Converts path to a cycle */}
\FOR{$i = 1$ to $\lceil \frac{n}{2} \rceil$}
\IF{ $deg(v_i) == 2$}
\STATE{Augment the edge $\{v_i, v_{\lfloor \frac{n}{2} \rfloor + i} \}$ to $T$ and update $E_{ca}$.}
\STATE{ \tt /* Every augmented edge will create a $C_k$, $k = \lfloor \frac{n}{2} \rfloor +1 $ */}
\ENDIF
\ENDFOR
\IF{ $deg(v_n) == 2$}
\STATE{ \tt /* True, if $n$ is odd */}
\STATE{Augment the edge $\{v_n, v_{\lfloor \frac{n}{2} \rfloor + 1} \}$ to $T$ and update $E_{ca}$.}
\ENDIF
\STATE{Return the augmented graph $H$ and $E_{ca}$}
\end{algorithmic}
\end{algorithm}

\begin{algorithm}
\caption{Tri-connectivity Augmentation in non-path like Trees: \em{ Non-path\_Augmentation(Tree $T$)}}
\label{ordering1}
\begin{algorithmic}[1] 
\STATE{Let $r$ be a vertex of maximum degree and $T$ is rooted at $r$.}
\STATE{Let $D_1=\{v_1, v_2, \ldots, v_l\}$ be the set of leaves in $T$.}
\STATE{Perform Level ordering starting from $r$ and $(u_1=r, u_2, \ldots, u_n)$ denote the ordering.}
\FOR{$i=1$ to $l$}
\STATE{$B_i=V(P_{rv_i})$}
\STATE{\tt /* $B_i$ is the set of vertices in branch $i$ i.e., set of vertices in the path $P_{r,v_i}$. */}
\ENDFOR
\STATE{Perform level ordering starting from $r$ and $W = (w_1, w_2, \ldots, w_k)$, $k \leq n-l-1$, denote the ordering of degree two vertices in $T$}
\STATE{$mark[w_i]=FALSE$, $\forall ~ 1 \leq i \leq k$}
\FOR{$i=1$ to $k$}
\STATE{Find the least $j$ in $W$, $1 \leq j \leq i-1$, in such a way that $mark[w_j]=FALSE$ and there exists $s\neq t$ such that $w_i \in B_s$ and $w_j \in B_t$. }
\IF{such $j$ exists}
\STATE{Augment $\{w_i,w_j\}$ to $T$ and update $E_{ca}$}
\STATE{$mark[w_i]=mark[w_j]=TRUE$}
\ENDIF
\ENDFOR
\STATE{Let $A = \{x_1,x_2, \ldots, x_m\}$ denotes the set of unmarked vertices in $W$, where $x_i$ preserves the ordering in $w_j$}
\IF{$\vert A \vert $ = 0}
\STATE{Augment $\{v_{1},v_{l}\}$ and $\{v_{i},v_{i+1}\}$ $\forall$ $1 \leq i \leq l-1$ to $T$ and update $E_{ca}$}
\ELSIF{$ A = \{x_1, x_2, \ldots, x_m\}$, where $m$ is even}
\STATE{Augment $\{x_{2},x_{m}\}$,$\{x_{3}, x_{m-1}\}$, $\ldots$, $\{x_{\frac{m}{2}},x_{\frac{m}{2}+2}\}$ to $T$ and update $E_{ca}$}
\STATE{Augment $\{v_{i},v_{i+1}\}$ $\forall$ $1 \leq i \leq l-1$ to $T$ and update $E_{ca}$}
\IF{$\{x_{\frac{m}{2}+1},v_{l}\} \in E(T)$}
\STATE{Augment $\{x_{1},v_{l}\}$, $\{x_{\frac{m}{2}+1},v_{1}\}$ and update $E_{ca}$}
\ELSE
\STATE{Augment $\{x_{1},v_{1}\}$, $\{x_{\frac{m}{2}+1},v_{l}\}$ and update $E_{ca}$}
\ENDIF
\ELSIF{$ A = \{x_1, x_2, \ldots, x_m$\}, where $m$ is odd}
\STATE{Augment $\{x_{1},x_{m}\}$, $\{x_{2},x_{m-1}\}, \ldots, \{x_{\lfloor \frac{m}{2} \rfloor},x_{\lfloor \frac{m}{2}+2}\rfloor \}$ to $T$ and update $E_{ca}$}
\STATE{Augment $\{v_{1},v_{l}\}$ to $T$ and update $E_{ca}$}
\STATE{Augment $\{v_{i},v_{i+1}\}$ $\forall$ $1 \leq i \leq l-1$ to $T$ and update $E_{ca}$}
\IF{$\{x_{ \frac{m+1}{2} },v_{l}\} \in E(T)$}
\STATE{Augment $\{x_{ \frac{m+1}{2} },v_{1}\}$ and update $E_{ca}$}
\ELSE
\STATE{Augment $\{x_{ \frac{m+1}{2} },v_{l}\}$ and update $E_{ca}$}
\ENDIF
\ENDIF
\STATE{Return the augmented graph $H$ and $E_{ca}$}
\end{algorithmic}
\end{algorithm}

\begin{lemma}
Let T be a tree with $n \geq 4$ vertices. Algorithm $\mathtt{Path\_Augmentation()}$ yields a graph $H$, where $\forall ~ v \in V(H), deg_H(v) \geq 3$.
\end{lemma}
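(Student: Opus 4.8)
The plan is to follow the degrees of all vertices through the in-place edge augmentations performed by $\mathtt{Path\_Augmentation()}$. First I would observe that this routine is invoked only when the input tree is a path, so we may assume $T=P_n$ with the ordering $v_1,v_2,\dots,v_n$ produced by Step~1, and $n\ge 4$. Write $m=\lfloor \frac n2\rfloor$. After Step~2 augments $\{v_1,v_n\}$ the current graph is the cycle $C_n$, in which $\deg(v_i)=2$ for every $i$; in particular the two former leaves $v_1,v_n$ now have degree $2$. The key monotonicity remark is that every subsequent line only \emph{adds} edges, so no vertex degree ever decreases after Step~2. Hence it suffices to show that every vertex occurs as an endpoint of some edge augmented in the \texttt{for} loop or in the final \texttt{if} block; each such vertex then has degree $\ge 3$ in $H$.

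Next I would carry out the index bookkeeping, splitting on the parity of $n$. If $n=2m$, then $\lceil \frac n2\rceil=m$, and I claim that when iteration $i$ (for $1\le i\le m$) tests $\deg(v_i)$, we still have $\deg(v_i)=2$: the edges augmented in earlier iterations are $\{v_j,v_{m+j}\}$ for $j<i$, whose endpoints lie in $\{v_1,\dots,v_{i-1}\}\cup\{v_{m+1},\dots,v_{m+i-1}\}$, and the index $i$ lies in neither set since $i>i-1$ and $i\le m<m+1$. The same inequalities show $v_{m+i}$ is untouched before iteration $i$, and for $n\ge 4$ the pair $\{v_i,v_{m+i}\}$ is not already an edge of $C_n$; so iteration $i$ genuinely raises both $\deg(v_i)$ and $\deg(v_{m+i})$ to $3$. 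As $i$ runs over $1,\dots,m$ the endpoints used are exactly $\{v_1,\dots,v_m\}\cup\{v_{m+1},\dots,v_{2m}\}=V(T)$, so every vertex of $H$ has degree $\ge 3$; moreover $v_n=v_{2m}$ was raised at $i=m$, so the guard $\deg(v_n)=2$ of the final \texttt{if} is false and the block is correctly skipped.

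If $n=2m+1$, then $\lceil \frac n2\rceil=m+1$. For $i=1,\dots,m$ the argument above applies verbatim (the indices $m+i$ then range over $m+1,\dots,2m$, all valid and previously untouched), so $v_1,\dots,v_m$ and $v_{m+1},\dots,v_{2m}$ all reach degree $\ge 3$. At iteration $i=m+1$ the vertex $v_{m+1}$ was already lifted to degree $3$ at $i=1$ via $\{v_1,v_{m+1}\}$, so the test $\deg(v_{m+1})=2$ fails and nothing is done — matching the in-code comment. The only remaining vertex is $v_n=v_{2m+1}$, which still has degree $2$ (it never occurs as a left endpoint $v_i$ with $i\le m+1$, nor as a right endpoint $v_{m+i}$ with $i\le m$), so the final \texttt{if} fires and augments $\{v_n,v_{\lfloor n/2\rfloor+1}\}=\{v_n,v_{m+1}\}$ — a new edge for $n\ge 4$ — lifting $\deg(v_n)$ to $3$. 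Thus every vertex of $H$ has degree at least $3$, and combining the two cases proves the lemma.

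The routine parts here are the parity split and the arithmetic on the index ranges $\{1,\dots,i-1\}$ and $\{m+1,\dots,m+i-1\}$. The one place that needs genuine care — and what I expect to be the main obstacle — is the ``still degree $2$'' invariant: one must be certain that at the moment iteration $i$ evaluates $\deg(v_i)==2$, no earlier augmented edge has already incremented $\deg(v_i)$ or $\deg(v_{m+i})$, and that the pair being added is a non-edge (this is where $n\ge 4$ is used); otherwise the intended pairing $v_i\leftrightarrow v_{m+i}$ would be disturbed. Once that invariant is established, the conclusion is immediate from the facts that augmentation never decreases degrees and that the endpoints used across the loop and the final block cover all of $V(T)$.
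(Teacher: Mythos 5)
Your proof is correct and takes essentially the same route as the paper: convert the path to a cycle, observe that the loop pairs $v_i$ with $v_{\lfloor n/2\rfloor+i}$ so that every vertex is covered, and handle the leftover vertex $v_n$ when $n$ is odd. Your version is in fact more careful than the paper's --- you explicitly verify the ``still degree $2$'' invariant and that each augmented pair is a non-edge --- but the underlying argument is the same.
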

\begin{proof}
The algorithm, first converts the path $P_{n}=(v_1, v_2, \dots, v_n)$ to a cycle $C_{n}=(v_1, v_2, \dots, v_n)$, by adding an edge between two end vertices, i.e., the algorithm augments an edge $\{v_1,v_n\}$ in \emph{Step 2}. Now, the degree of each vertex in the resultant graph is two. In \emph{Steps 3-8}, for each vertex $v_i$ of degree two in the set $\{v_1, v_2, \ldots, v_{\lceil \frac{n}{2} \rceil}\}$, we identify a vertex $v_j$ such that the length of the path $P_{v_iv_j} = \lfloor \frac{n}{2} \rfloor + 1$ and further, we augment an edge between $v_i$ and $v_j$. Thus, in the resulting graph, degree of every vertex is three if $n$ is even and degree of all vertices other than the vertex $v_n$ is three, if $n$ is odd. So, if $n$ is odd, the algorithm augments an edge $\{v_n,v_{\lfloor \frac{n}{2} \rfloor +1}\}$ in \emph{Step 9-11}. This completes the path augmentation and in the resulting graph $H$ degree of each vertex is at least three. $\hfill \qed$
\end{proof}

\begin{lemma}
Let T be a tree with $n \geq 4$ vertices. Algorithm $\mathtt{Non}$-$\mathtt{Path\_Augmentation(T)}$ yields a graph $H$, where $\forall ~ v \in V(H), deg_H(v) \geq 3$.
\end{lemma}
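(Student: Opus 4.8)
The plan is to classify every vertex $v\in V(H)=V(T)$ according to its degree in $T$: the leaves $D_1=\{v_1,\dots,v_l\}$, the degree-two vertices $D_2$ (which is precisely the vertex set of the sequence $W$ used by the algorithm), and the vertices of degree at least three. Since the non-path augmentation only \emph{adds} edges, $deg_H(v)\ge deg_T(v)\ge 3$ for every vertex of the last class, and in particular the root $r$ is in this class because $T$ is a non-path tree, so $deg_T(r)=\Delta(T)\ge 3$. Thus it is enough to establish (i) every leaf is an endpoint of at least two augmented edges, and (ii) every vertex of $D_2$ is an endpoint of at least one augmented edge; then $deg_H\ge 1+2=3$ on $D_1$ and $deg_H\ge 2+1=3$ on $D_2$, which proves the lemma.

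Before the case analysis I would record two facts about the set $A=\{x_1,\dots,x_m\}$ of unmarked vertices. First, all of $A$ lies in a single branch $B_t$: if $x_a,x_b\in A$ with $a<b$ were in different branches, then when the main \textbf{for} loop processed $x_b$ the index of $x_a$ would be an admissible choice of $j$ (smaller, still unmarked at the end hence unmarked then, and in a different branch), so $x_b$ would have been matched, a contradiction. Second, since the level ordering restricted to one root-to-leaf path is the order of increasing depth, $x_1,\dots,x_m$ appear in this order along $B_t$; hence whenever $|a-b|\ge 2$ the vertex $x_{\min(a,b)+1}$ lies strictly between $x_a$ and $x_b$ on $B_t$, so $x_ax_b\notin E(T)$.

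For (ii), fix $w\in D_2$. If $w$ is marked inside the main \textbf{for} loop it is an endpoint of the edge that marked it, so assume $w=x_p\in A$; I would then argue along the three branches of the algorithm. When $m=|A|$ is even, the pairs $\{x_2,x_m\},\{x_3,x_{m-1}\},\dots,\{x_{m/2},x_{m/2+2}\}$ cover $\{x_2,\dots,x_m\}\setminus\{x_{m/2+1}\}$ exactly (with no pair at all when $m=2$), and the following \textbf{if}-block augments one edge at each of $x_1$ and $x_{m/2+1}$, joining it to a leaf. When $m$ is odd, the pairs $\{x_1,x_m\},\dots,\{x_{\lfloor m/2\rfloor},x_{\lfloor m/2\rfloor+2}\}$ cover $\{x_1,\dots,x_m\}\setminus\{x_{(m+1)/2}\}$ (no pair when $m=1$), and $x_{(m+1)/2}$ is joined to a leaf in the final \textbf{if}-block. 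By the second structural fact every such $x$--$x$ edge is new, so it raises the degree of its endpoints by one; hence each $x_i$, and therefore each vertex of $D_2$, is an endpoint of at least one augmented edge.

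For (i), note first that $l\ge 3$ (rooting at a vertex of degree $\ge 3$ gives at least three branches, hence at least three leaves), and that in a tree on $n\ge 4$ vertices no two leaves are adjacent and no degree-two vertex is adjacent to two distinct leaves (either would make $T$ a path on at most three vertices). In the branches $|A|=0$ and $|A|$ odd the algorithm augments $\{v_1,v_l\}$ together with all $\{v_i,v_{i+1}\}$; these are pairwise distinct new edges forming the cycle $v_1v_2\cdots v_lv_1$, so every leaf acquires exactly two (in the odd branch, $v_1$ or $v_l$ acquires a harmless third) incident augmented edges. In the branch $|A|$ even only the path $\{v_1,v_2\},\dots,\{v_{l-1},v_l\}$ is augmented, giving $v_2,\dots,v_{l-1}$ two such edges but $v_1,v_l$ only one; the concluding \textbf{if}-block then augments exactly one further edge at each of $v_1$ and $v_l$. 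The main difficulty is to check that these last one or two edges are genuinely new, so that they really raise $deg_H(v_1)$ and $deg_H(v_l)$ to $3$: one argues that in the even case $x_1$ cannot be the parent of $v_1$, since otherwise $x_1$ would be at once the shallowest and the deepest degree-two vertex of $B_t$, forcing $m=1$; and that the \textbf{if}/\textbf{else} test on $\{x_{m/2+1},v_l\}$, combined with ``$v_l$ has a unique neighbour in $T$'' and ``no degree-two vertex has two leaf neighbours'', excludes the remaining coincidences, the odd case being analogous with $x_{(m+1)/2}$ in place of $x_{m/2+1}$. This completes (i) and hence the proof.
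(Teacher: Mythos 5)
Your proof is correct and follows essentially the same route as the paper's: classify vertices by their degree in $T$ and run the case analysis on $|A|\in\{0,\text{even},\text{odd}\}$, checking that each leaf gains two incident augmented edges and each degree-two vertex gains one. The only difference is that you explicitly verify what the paper silently assumes --- that $A$ lies in a single branch in depth order, that no augmented edge already belongs to $E(T)$ (no two leaves are adjacent, no degree-two vertex has two leaf neighbours, $x_1$ has no leaf neighbour when $m\geq 2$), and that the paired indices really cover $A$ minus its middle element --- so your write-up is a strictly more rigorous version of the same argument.
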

\begin{proof}    
The algorithm, collects all degree two vertices and augment edges between those vertices which satisfies the condition in \emph{Step 11} and marks the end vertices of the augmented edges. Now, the marked vertices are of degree three. Collect the unmarked vertices (remaining vertices of degree 2) into the set $A$. We shall now analyze the \emph{Steps 18-37} of the algorithm by considering the following cases.

\begin{description}
\item[\textbf{Case 1:}] $ A = \emptyset$ \\
 i.e., all the degree two vertices in the given tree have become the degree three vertices in $H$. We now augment edges among the leaves such that there is a cycle $(v_1, v_2, \ldots, v_l)$. We can easily see that in the resultant graph $H$, for every vertex $v \in V(H)$,  $deg_H(v) = 3$. 

\item[\bf{Case 2:}] $A \neq \emptyset$ and $\mid A \mid$ is even, say $A = \{x_1, x_2, \ldots, x_m\}$.\\
We first form a path among leaves from $v_1$ to $v_l$ such that all the degree one vertices are converted to degree three vertices except $v_1$ and $v_l$, which is of degree two. Now, augment the edges $\{x_2,x_m\}$, $\{x_3,x_{m-1}\},$ \ldots, $\{x_{\frac{m}{2}}$,$x_{\frac{m}{2}+2}\}$. Thus, the only remaining degree two vertices are $x_1$, $x_{\frac{m}{2}+1}$, $v_1$ and $v_l$. Therefore, if $\{x_{\frac{m}{2}+1}, v_l\} \in E(T)$, then augment $\{x_1, v_l\}$ and $\{x_{\frac{m}{2}+1}, v_1\}$ and if $\{x_{\frac{m}{2}+1}, v_l\} \notin E(T)$, then augment $\{x_1, v_1\}$ and $\{x_{\frac{m}{2}+1}, v_l\}$. Hence in the resultant graph $H$ every vertex is of degree three.

\item[\bf{Case 3:}] $A \neq \emptyset$ and $\mid A \mid$ is odd, say $A = \{x_1, x_2, \ldots, x_m\}$.\\
We first form a cycle among leaves such that all the degree one vertices are converted to degree three vertices. Now, augment the edges $\{x_1,x_m\}$, $\{x_2,x_{m-1}\},$ \ldots, $\{x_{\lfloor \frac{m}{2} \rfloor}$,$x_{\lfloor \frac{m}{2}+2 \rfloor}\}$. Thus, the only remaining degree two vertex is $x_{\frac{m+1}{2}}$. Therefore, if $\{x_{\frac{m+1}{2}}, v_l\} \in E(T)$, then augment $\{x_{\frac{m+1}{2}}, v_1\}$ such that $deg_H(x_{\frac{m+1}{2}}) = 3$ and $deg_H(v_1) = 4$ and if $\{x_{\frac{m+1}{2}}, v_l\} \notin E(T)$, then augment $\{x_{\frac{m+1}{2}}, v_l\}$ such that $deg_H(x_{\frac{m+1}{2}}) = 3$ and $deg_H(v_l) = 4$. Hence in the resultant graph $H$ every vertex is of degree at least three. $\hfill \qed$
\end{description}
\end{proof}

\begin{lemma}
Let T be a tree with $n \geq 4$ vertices. Algorithm $\mathtt{Path\_Augmentation()}$ precisely augments $ \lceil \dfrac{2l_1+l_2}{2} \rceil$ edges.
\end{lemma}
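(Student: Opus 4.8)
The plan is to reduce the statement to an elementary counting argument. For a path $P_n$ exactly two vertices (the two endpoints) have degree one and the remaining $n-2$ internal vertices have degree two, so $l_1 = 2$ and $l_2 = n-2$, whence the target quantity $\lceil \frac{2l_1+l_2}{2}\rceil$ equals $\lceil \frac{n+2}{2}\rceil$. It therefore suffices to show that $\mathtt{Path\_Augmentation()}$ augments exactly $\lceil \frac{n+2}{2}\rceil$ edges, and I would establish this by directly counting the edges contributed by each block of the algorithm, splitting on the parity of $n$ at the end.

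First I would note that Step~2 contributes exactly one (genuinely new) edge $\{v_1,v_n\}$, since $\{v_1,v_n\}\notin E(T)$ for a path on $n\ge 4$ vertices. Then I would analyse the loop of Steps~3--8. The key bookkeeping fact is that the iteration indexed by $i$ (when executed) augments the edge $\{v_i, v_{\lfloor n/2\rfloor+i}\}$, so a vertex $v_i$ first reaches degree three either by being the left endpoint at iteration $i$, or by having been the right endpoint $v_{\lfloor n/2\rfloor+j}$ at some earlier iteration $j < i$, i.e.\ when $i=\lfloor n/2\rfloor+j$. As $i$ ranges only over $1,\dots,\lceil n/2\rceil$, the set of left-endpoint indices $\{1,\dots,\lceil n/2\rceil\}$ and the set of right-endpoint indices $\{\lfloor n/2\rfloor+1,\dots,n\}$ overlap in at most one vertex; I would also record the routine check that none of these augmented edges coincides with an edge of the cycle, which holds because the index gap $\lfloor n/2\rfloor$ is at least $2$ for $n\ge 4$.

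Next I would carry out the parity split. If $n$ is even then $\lceil n/2\rceil=\lfloor n/2\rfloor=n/2$, so the left- and right-endpoint index sets are disjoint; hence every $v_i$ with $1\le i\le n/2$ still has degree two when the loop reaches it, the loop augments exactly $n/2$ edges, and afterwards $v_n$ (hit as the right endpoint at $i=n/2$) already has degree three, so Steps~9--11 add nothing, giving a total of $1+n/2=\lceil\frac{n+2}{2}\rceil$. If $n$ is odd then $\lceil n/2\rceil=\lfloor n/2\rfloor+1$ and the two index sets overlap in the single vertex $v_{(n+1)/2}$, which is created as a right endpoint at iteration $i=1$ and is therefore the unique iteration skipped (when the loop reaches $i=(n+1)/2$); the loop thus augments $\lceil n/2\rceil-1=(n-1)/2$ edges. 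Moreover $v_n$ is never used as a right endpoint in the loop (its would-be iteration is exactly the skipped one), so $\deg(v_n)=2$ after the loop and Steps~9--11 augment the one edge $\{v_n, v_{\lfloor n/2\rfloor+1}\}$, for a total of $1+(n-1)/2+1=(n+3)/2=\lceil\frac{n+2}{2}\rceil$.

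The only real obstacle is the degree bookkeeping inside the loop: one must pin down exactly which iteration (if any) is skipped, which amounts to locating the unique index lying in both the left- and right-endpoint ranges, and to confirming that the vertex $v_n$ handled by Steps~9--11 is skipped by the loop precisely when $n$ is odd. Once these two facts are nailed down, the remaining computation is immediate and mirrors the parity case analysis already used in the proof that $\mathtt{Path\_Augmentation()}$ makes every degree at least three.
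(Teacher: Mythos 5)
Your proposal is correct and follows essentially the same route as the paper: a direct edge count of Step~2, the loop of Steps~3--8, and Steps~9--11, split on the parity of $n$ (the paper phrases the count in terms of $l_1+l_2=n$ degree-two vertices after Step~2 rather than substituting $l_1=2$, $l_2=n-2$ up front, but the arithmetic is identical). Your version is in fact more careful than the paper's, since you explicitly identify the unique skipped iteration $i=(n+1)/2$ for odd $n$ and verify that $v_n$ is never a right endpoint of a loop edge, points the paper glosses over.
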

\begin{proof}
 \emph{Step 2} of algorithm augments an edge between two leaves and this increases $l_2$ by two. Thus there are $l_1+l_2$ degree two vertices and \emph{Steps 3-8} augments $\lceil \frac{l_1+l_2}{2}\rceil$ new edges, if $n$ is even and $\lfloor \frac{l_1+l_2}{2}\rfloor$ new edges, if $n$ is odd. If $n$ is odd, \emph{Steps 9-12} augments an edge. Thus, if $n$ is odd, we have augmented $\frac{l_1}{2}+ \lfloor \frac{l_1+l_2}{2}\rfloor +1 = \frac{l_1}{2}+ \lceil \frac{l_1+l_2}{2}\rceil $ (Since, $l_2$ is odd). If $n$ is even, we have augmented $\frac{l_1}{2}+ \lceil \frac{l_1+l_2}{2}\rceil $ edges. In total, since $l_1 = 2$, the algorithm augments $\lceil \frac{l_1}{2}+  \frac{l_1+l_2}{2}\rceil =  \lceil \frac{2l_1+l_2}{2} \rceil $.  Therefore, the algorithm augments $\lceil \frac{2l_1+l_2}{2} \rceil$ edges in total. $\hfill \qed$
\end{proof}

\begin{lemma}
Let T be a tree with $n \geq 4$ vertices. Algorithm $\mathtt{Non}$-$\mathtt{Path\_Augmentation()}$ precisely augments $\lceil \dfrac{2l_1+l_2}{2} \rceil$ edges.
\end{lemma}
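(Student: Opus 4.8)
The plan mirrors the proof of the preceding lemma for $\mathtt{Path\_Augmentation()}$: I would walk through $\mathtt{Non}$-$\mathtt{Path\_Augmentation()}$ step by step, count the edges added to $E_{ca}$, and show the running total collapses to $\lceil \frac{2l_1+l_2}{2}\rceil$ in every case. Throughout, write $l = l_1$ for the number of leaves of $T$ (so $D_1 = \{v_1,\dots,v_l\}$), and recall from \emph{Step 8} that $k = l_2$ is the number of degree two vertices of $T$. Let $p$ be the number of edges augmented inside the matching loop (\emph{Steps 10--17}); a successful iteration augments exactly one edge and marks exactly two previously unmarked vertices of $W$, so after the loop the set $A$ of still-unmarked vertices satisfies $m := |A| = l_2 - 2p$. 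In particular $m \equiv l_2 \pmod 2$ and $p = (l_2 - m)/2$; this parity bridge is the identity that makes everything line up.

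Next I would dispatch the three cases of \emph{Steps 18--37}. In \textbf{Case 1} ($m = 0$, so $l_2$ is even) the only further edges are the $l$ edges $\{v_1,v_l\}$ and $\{v_i,v_{i+1}\}$, $1 \le i \le l-1$, forming a cycle on the leaves; the total is $p + l_1 = l_1 + l_2/2 = \lceil\frac{2l_1+l_2}{2}\rceil$. In \textbf{Case 2} ($m$ even, $m \ge 2$) the leaves get a path ($l_1 - 1$ edges), the zig-zag $\{x_2,x_m\},\{x_3,x_{m-1}\},\dots,\{x_{m/2},x_{m/2+2}\}$ contributes $m/2 - 1$ edges (the empty list when $m = 2$, consistent with this count), and whichever branch of the inner test fires, exactly two connector edges on $\{x_1,x_{m/2+1},v_1,v_l\}$ are added; the total is $p + (l_1-1) + (m/2-1) + 2 = l_1 + p + m/2 = l_1 + l_2/2 = \lceil\frac{2l_1+l_2}{2}\rceil$ since $l_2$ is even. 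In \textbf{Case 3} ($m$ odd, so $l_2$ is odd) the leaves get a cycle ($l_1$ edges), the zig-zag $\{x_1,x_m\},\dots,\{x_{\lfloor m/2\rfloor},x_{\lfloor m/2\rfloor+2}\}$ contributes $\lfloor m/2\rfloor = (m-1)/2$ edges, and one further connector edge on $x_{(m+1)/2}$ is added; the total is $p + l_1 + (m-1)/2 + 1 = l_1 + p + (m+1)/2 = l_1 + (l_2+1)/2 = l_1 + \lceil l_2/2\rceil = \lceil\frac{2l_1+l_2}{2}\rceil$.

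Since the three cases are exhaustive and each yields exactly $\lceil\frac{2l_1+l_2}{2}\rceil$ augmented edges, the lemma follows. The only points needing care — and the place where I expect the argument to be fiddly rather than deep — are getting the index ranges of the two zig-zag sequences right so that their cardinalities are precisely $m/2 - 1$ and $(m-1)/2$ (in particular handling the degenerate values $m \in \{1,2\}$, where the zig-zag list is empty), and checking that each of the inner \texttt{if}/\texttt{else} alternatives in Cases 2 and 3 contributes the same number of connector edges. Once the substitution $m = l_2 - 2p$ is made, the $p$-terms cancel and the parity of $l_2$ selects the correct value of the ceiling, so the remainder is routine bookkeeping.
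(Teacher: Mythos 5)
Your proof is correct and follows essentially the same route as the paper's: a case analysis on $|A|$ with the identity $p=(l_2-m)/2$ linking the matching-loop count to the leftover set, then straightforward bookkeeping in each of the three cases. Your explicit parity bridge $m\equiv l_2 \pmod 2$ and the remark about the degenerate zig-zag lists for small $m$ are slightly more careful than the paper's version, but the substance is identical.
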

\begin{proof}
We present a proof by case analysis based on the cardinality of the set $A$ generated by \emph{Algorithm 3} in \emph{Step 17}.
\begin{description}
\item[Case 1:] $\vert A\vert =0$\\
$l_1$ edges are augmented in \emph{Step 19} by forming a cycle among leaves. In \emph{Steps 10-16}, we augment edges between the degree two vertices and since $\vert A \vert =0$, $l_2$ is even and $\lceil \frac{l_2}{2} \rceil$ edges are augmented. In total, we have augmented $l_1 + \lceil \frac{l_2}{2} \rceil$ edges,  i.e., we have augmented $\frac{2l_1}{2} +\lceil \frac{l_2}{2} \rceil = \lceil \frac{2l_1+l_2}{2} \rceil$ edges.
\item[Case 2:] $\vert A\vert = 2k, k \in \mathbf{Z}$.\\
Let $\vert A \vert = m$. Among $l_2$ degree two vertices, degree of $(l_2-m)$ vertices increases by one in \emph{Steps 10-16}, i.e., $\frac{l_2-m}{2}$ edges are augmented. Note that $\frac{m}{2}-1$ edges are augmented in \emph{Step 21}, $l_1-1$ edges are augmented in \emph{Step 22}, $2$ edges are augmented in \emph{Steps 23-27}. In total, we have augmented $\frac{l_2-m}{2} + \frac{m}{2}-1 + l_1-1+2$ edges. Since $l_2$ is even, $\frac{l_2-m}{2} + \frac{m}{2}-1 + l_1-1+2 = l_1 + \frac{l_2}{2} = l_1 + \lceil \frac{l_2}{2} \rceil = \lceil \frac{2l_1+l_2}{2} \rceil$. Therefore, $\lceil \frac{l_1+l_2}{2} \rceil$ edges are augmented.
\item[Case 3:] $\vert A\vert = 2k+1, k \in \mathbf{Z}$.\\
Let $\vert A \vert = m$. Among $l_2$ degree two vertices, degree of $(l_2-m)$ vertices increases by one in \emph{Steps 10-16} i.e., $\frac{l_2-m}{2}$ edges are augmented in \emph{Steps 10-16}. $\frac{m-1}{2} $ edges are augmented in \emph{Step 29}, $l_1$ edges are augmented in \emph{step 30-31} and a edge is augmented in \emph{Steps 32-36}. In total, we have augmented $\frac{l_2-m}{2} +  \frac{m-1}{2}  + l_1 +1$ edges. Since $m$ is odd and $(l_2-m)$ is even, $\frac{l_2-m}{2} +  (\frac{m-1}{2} +1 )  + l_1  = \lceil \frac{l_2-m}{2} \rceil + \lceil \frac{m}{2} \rceil + l_1 =  \lceil \frac{2l_1+l_2}{2} \rceil$ edges are augmented in total. 
\end{description} 
Thus, the algorithm augments $\lceil \frac{2l_1+l_2}{2} \rceil$ edges. $\hfill \qed$
\end{proof}

\begin{lemma}
For a tree $T$, the graph obtained from the algorithm $\mathtt{Path\_Augmentation()}$ is 3-connected.
\end{lemma}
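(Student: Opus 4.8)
The plan is to first determine the precise structure of the graph $H$ returned by $\mathtt{Path\_Augmentation()}$ (we take $n=|V(T)|\ge 4$, as in the preceding lemmas), and then verify $3$-connectedness directly: that $H\setminus S$ is connected for every vertex set $S$ with $|S|\le 2$. Let $C_n=(v_1,v_2,\dots,v_n,v_1)$ be the cycle created in \emph{Step 2}. Unwinding the loop of \emph{Steps 3--12} --- separately for $n$ even and $n$ odd --- I would show that, apart from $\{v_1,v_n\}$, every augmented edge is a \emph{chord} of $C_n$ joining two vertices whose cyclic distance along $C_n$ is exactly $\lfloor n/2\rfloor$: for $n=2k$ the chords are $\{v_j,v_{j+k}\}$, $1\le j\le k$ (so that $H$ is precisely the M\"obius ladder on $2k$ vertices), and for $n=2k+1$ they are $\{v_j,v_{j+k}\}$, $1\le j\le k$, together with $\{v_{k+1},v_{2k+1}\}$, which also has cyclic length $k=\lfloor n/2\rfloor$. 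Since every vertex has degree $2$ in $C_n$ and $deg_H(v)\ge 3$ for all $v$ (established in the earlier lemma), each vertex of $H$ is incident to at least one such chord.

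Granting this description, I would next show that $H\setminus S$ is connected for all $S$ with $|S|\le 2$. When $|S|\le 1$ this is immediate, because $C_n\setminus S$ is a spanning path or cycle. So let $S=\{v_a,v_b\}$. If $v_a$ and $v_b$ are consecutive on $C_n$, then $C_n\setminus S$ is a single path through all of the remaining $n-2\ge 2$ vertices, and we are done. Otherwise $C_n\setminus S$ is a disjoint union of two nonempty arcs $P$ and $Q$ with $|P|+|Q|=n-2$; choosing $|P|\le|Q|$ forces $|P|\le\lfloor n/2\rfloor-1$. The crucial observation is that every chord of $H$ incident to a vertex of $P$ has its other endpoint in $Q$: indeed, every vertex of $P$ is within cyclic distance $|P|\le\lfloor n/2\rfloor-1$ of each other vertex of $P$ and of each of $v_a,v_b$ (the two vertices bounding the arc $P$ on $C_n$), whereas a chord has cyclic length exactly $\lfloor n/2\rfloor$, so a chord leaving a vertex of $P$ cannot land in $P\cup\{v_a,v_b\}=P\cup S$ and must therefore land in $Q$. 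Now take any $v_c\in P$: it is incident to some chord, that chord goes to $Q$, and it survives in $H\setminus S$ since neither endpoint lies in $S$. As the arcs $P$ and $Q$ are each connected, $H\setminus S$ is connected. Hence $\kappa(H)\ge 3$, and since $H$ has $n\ge 4$ vertices, $H$ is $3$-connected.

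The only genuinely case-sensitive part is the structural description in the first paragraph: tracking exactly which edges \emph{Steps 3--12} add in each parity, checking that the odd-$n$ ``extra'' edge $\{v_{k+1},v_{2k+1}\}$ indeed has cyclic length $\lfloor n/2\rfloor$, and that when $n\ge 4$ all augmented edges are genuine chords (distinct from each other and from the edges of $C_n$). After that the connectivity argument is uniform and short, and it absorbs the base case $n=4$ (where $H=K_4$) with no separate treatment.
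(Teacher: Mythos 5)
Your proof is correct and takes essentially the same route as the paper's: both argue that deleting any set $S$ with $|S|\le 2$ leaves the cycle split into at most two arcs, and that a chord of cyclic length $\lfloor n/2\rfloor$ must join the shorter arc to the other one. In fact your version is tighter at the key step --- the paper simply asserts that every internal vertex of the arc $P'_{v_iv_j}$ has a neighbour outside it, which is true only for the shorter arc, whereas your cyclic-distance bound $|P|\le\lfloor n/2\rfloor-1$ supplies exactly the justification the paper omits; the only other (cosmetic) difference is that the paper additionally exhibits $N_H(v_1)$ as a size-3 separator to match its convention that 3-connected means $\kappa(H)=3$ exactly.
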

\begin{proof}
Our claim is to prove that every minimal vertex separator is of size at least $3$ and there exist at least one vertex of degree $3$. On the contrary, assume that there exist at least one minimal vertex separator of size at most 2, say $\vert S\vert \leq 2$. Let $C = (v_1, v_2, \ldots, v_n)$ be the cycle formed in \emph{Step 2} of \emph{Algorithm 2}, where $\{v_1, v_n\} \in E_{ca}$.
\begin{description}
\item[Case 1:] $\vert S \vert =1$. \\
 For every vertex $v_i \in V(H)$, $v_i \in V(C)$. Hence, $H\backslash S$ is connected.
\item[Case 2:] $\vert S \vert =2$. Let $S = \{v_i,v_j\}, i \neq j$ and $1 \leq i,j \leq n$. 
\begin{description}
\item[Case 2.1:] $\{v_i,v_j\} \in E(C)$. Since $C$ is a cycle, $H\backslash S$ is connected.
\item[Case 2.2:] $\{v_i,v_j\} \in E_{ca}\backslash E(C)$ or $\{v_i,v_j\} \notin E(H)$\\
For every internal vertex $v_k$ in the path $P'_{v_iv_j} = \{v_i, v_{i+1}, \ldots, v_j\}$ there exist a vertex $v_l \in V(H)$ such that $\{v_k, v_l\} \in E(H)$ and $v_l \notin V(P'_{v_iv_j})$ by \emph{Steps 3-12} of \emph{Algorithm 2}. Thus, $H\backslash S$ is connected.
\end{description}
\end{description}
In all the above cases, the graph $H\backslash S$ is connected, which is a contradiction to the assumption that $S$ is a vertex separator. Therefore, every minimal vertex separator of $H$ is of size at least 3. Note that by our augmentation procedure, $deg_H(v_1) = 3$.  Clearly, $N_H(v_1)$ is a minimal vertex separator of size three. Thus, the graph $H$ is 3-connected. $\hfill \qed$
\end{proof}

\begin{lemma}
For a tree $T$, the graph obtained from the algorithm $\mathtt{Non}$-$\mathtt{Path\_Augmentation()}$ is 3-connected.
\end{lemma}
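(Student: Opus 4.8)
Following the two-part template of Lemma~6 (the path case), the plan is: first show that no vertex set of size at most two separates $H$, so $\kappa(H)\ge 3$; then exhibit a vertex separator of size exactly three, so $\kappa(H)\le 3$ and therefore $\kappa(H)=3$. The second part is easy: since $T$ is a non-path tree, $l_1\ge 3$, so by the degree analysis in the proof of Lemma~3 an ``interior'' leaf $v_j$ ($2\le j\le l-1$) of the leaf cycle/path satisfies $deg_H(v_j)=3$; for $n\ge 5$ the set $N_H(v_j)$ separates the single vertex $v_j$ from the rest of $H$, and the one remaining instance $n=4$ (the star, for which $H=K_4$) is $3$-connected by inspection. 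Hence the substance is the lower bound on separator sizes.

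For that, I would assume for contradiction that $S$, with $|S|\le 2$, is a minimal vertex separator of $H$, and use two properties of the graph produced by Algorithm~3. \textbf{(F1)} In $H$ all $l_1$ leaves of $T$ are joined into a single subgraph $L$ that is a cycle on the leaves (Cases~1 and~3) or the path $v_1v_2\cdots v_l$ together with two more edges incident to $v_1$ and $v_l$ (Case~2). \textbf{(F2)} Every degree-two vertex $w$ of $T$ receives an augmented edge, and --- reading the ``different branch'' test of Step~11 with its intended meaning, that two degree-two vertices are joined only when neither is an ancestor of the other in the rooted tree --- the partner of $w$ is not a neighbour of $w$ in $T$; the edges attached to the leftover set $A$ in Steps~18--37 have the same property because of the $\in E(T)$ / $\notin E(T)$ guards there.

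Next I would split on $|S|$. If $S=\{v\}$ with $v$ a leaf of $T$, then $T\setminus v$ is connected and so is $H\setminus v$; if $S=\{v\}$ with $v$ not a leaf of $T$, then $v\notin V(L)$, every component of $T\setminus v$ contains a leaf of $T$ (a one-vertex component is itself such a leaf, and a larger component is a tree with two of its own leaves, at most one adjacent to $v$), and the intact $L$ re-links all these components, so $H\setminus v$ is connected---a contradiction. If $S=\{a,b\}$ with both $a,b$ leaves of $T$, then $T\setminus\{a,b\}$ is already connected and so is $H\setminus\{a,b\}$. In the remaining case at least one of $a,b$ is a non-leaf; every component of $T\setminus\{a,b\}$ that contains a leaf of $T$ is re-linked to the others through $L$ minus $V(L)\cap\{a,b\}$ (a short sub-analysis dispatches the Case-2 path shape of $L$ when an interior leaf is deleted). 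The one genuinely new case is a component $C$ of $T\setminus\{a,b\}$ that contains \emph{no} leaf of $T$: by the leaf-counting above, such a $C$ is a path all of whose vertices are degree-two vertices of $T$, with one end adjacent to $a$ and the other to $b$ (possibly $C$ is a single degree-two vertex adjacent to both). Here I would invoke (F2) to find a vertex $w\in C$ whose augmented edge $\{w,z\}$ has $z\notin C\cup\{a,b\}$ --- intuitively, $a$ and $b$ are the only vertices through which $C$ attaches to the rest of $T$, so the ``other branch'' that makes some vertex of $C$ eligible for its partner must leave $C$ along a route avoiding $\{a,b\}$ --- and then $C$ stays attached in $H\setminus\{a,b\}$. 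Putting the cases together shows $H\setminus S$ is connected, contradicting that $S$ separates; hence every minimal vertex separator of $H$ has size at least three, which with the first paragraph gives $\kappa(H)=3$.

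The step I expect to be the main obstacle is exactly the last one: proving that a leaf-free component $C$ cannot have \emph{all} of its augmented edges with both endpoints in $C\cup\{a,b\}$. This is where the rule ``join $w$ to the least unmarked partner lying in a different branch'' and the explicit handling of the leftover set $A$ are really used: the crux is that the two attachment vertices $a$ and $b$ of $C$ cannot simultaneously be the branch-separating ancestor for every vertex of $C$, so some vertex of $C$ must be matched to, or joined to a leaf through, a vertex outside $C\cup\{a,b\}$. A secondary, more routine obstacle is the bookkeeping of how $\{a,b\}$ meets the three possible shapes of $L$, which follows the pattern of the proof of Lemma~6.
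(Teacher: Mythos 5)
Your overall strategy matches the paper's: assume a separator $S$ with $|S|\le 2$, show $H\setminus S$ stays connected, then exhibit a degree-three vertex whose neighbourhood is a size-three separator. Your case decomposition is different and arguably cleaner: the paper classifies $S$ by whether $\{u,v\}$ is a tree edge, an augmented edge, or a non-edge, and within each case argues ad hoc that various paths to $r$ or to leaves survive; you instead classify the components of $T\setminus S$ by whether they contain a leaf of $T$, reduce everything to the single hard case of a leaf-free component $C$, and correctly characterize $C$ as a chain of degree-two vertices of $T$ strung between $a$ and $b$. That reduction is sound and isolates exactly the point on which the whole lemma turns.

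But that point is left unproven, and it is a genuine gap rather than a routine verification. Two specific problems. First, your (F2) rests on reading Step 11 ``with its intended meaning''; as written, the condition ``there exist $s\ne t$ with $w_i\in B_s$ and $w_j\in B_t$'' is satisfied by two vertices of the same chain $C$ whenever at least two leaves of $T$ lie below $C$ (every vertex of the chain belongs to every branch through it), so the literal algorithm can match two vertices of $C$ to each other, and the property you need --- that a Step-11 partner of $w\in C$ lies outside $C\cup\{a,b\}$ --- does not follow from the pseudocode. Second, even granting the charitable reading, the vertices of $C$ that survive into the leftover set $A$ are paired in Steps 21/29 purely by their index in the level order ($x_i$ with $x_{m+1-i}$ or $x_{m+2-i}$), and nothing you have said rules out both endpoints of such a pair lying in $C$; you would need to argue either that $A\cap C$ contains the middle element(s) of $A$ (which are wired to the leaves $v_1,v_l$) or that some element of $A\cap C$ is paired outside $C$. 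For what it is worth, the paper's own proof does not close this either --- in its Case 2.2 it simply asserts that internal vertices of $P_{uv}$ are ``adjacent to a vertex in a different branch'' by Steps 11 and 23--36 --- so your proposal sits at the same level of rigor at the crux, but the step you flag as the main obstacle is indeed still open in your write-up and cannot be counted as done.
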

\begin{proof}
It is enough to prove that the size of every minimal vertex separator is at least 3 and there exists at least one minimal vertex separator of size 3. On the contrary, assume that there exist at least one minimal vertex separator $S$ such that $\vert S \vert \leq 2$. Let $P = \{v_1, v_2, \ldots, v_l\}$ be the path formed in \emph{Steps 18-37} of \emph{Algorithm 3}, where $E(P) \subset E_{ca}$.
\begin{description}
\item[Case 1:] $\vert S \vert = 1$. Let $S = \{u\}$. The vertex $u$ can be a root node, $r$, or a node in the path, $P$, or neither. By case analysis, we prove that the graph $H\backslash S$ is connected, which forms a contradiction to the definition of $S$. \\
\begin{description}
\item[Case 1.1:] If $u=r$, then since $P$ is the path $H \backslash S$ is connected. 
\item[Case 1.2:] If $u \in V(P)$, then, since $deg_T(u) = 1$, $T \backslash S$ is connected, Hence $H \backslash S$ is also connected. 
\item[Case 1.3:] If $u \in V(H)\backslash (V(P)\cup \{r\})$, then every vertex $w \in P_{ru}\backslash \{u\}$ has a path $P_{wr}$ such that $u \notin V(P_{wr})$ and every vertex $x \in P_{uv_i}\backslash \{u\}$, where $v_i \in D_1$ for some $1\leq i\leq l$ such that $P_{uv_i}$ exist in $T$, has a path $P_{xv_i}$ such that $u \notin V(P_{xv_i})$. Thus, the graph $H\backslash S$ is connected.
\end{description}
\item[Case 2:] If $\vert S \vert = 2$, say $S=\{u,v\}$. $S$ can either be a clique or an independent set. If $S$ is a clique then either the edge is from the tree $T$ or from the augmentation set $E_{ca}$. In each case, we prove that the graph $H\backslash S$ is connected, which is a contradiction to the definition of $S$.

\begin{description}
\item[Case 2.1:] $\{u,v\} \in E(T)$. 

\begin{itemize}
\item[$\bullet$] If either $u$ or $v$ is a root node. Without loss of generality, let $u$ be the root node. By the path $P$ in $H$, every pair of vertex $w,x \in V(H)\backslash S$ has a path connecting them in $H\backslash S$. Thus, the graph $H\backslash S$ is connected. 
\item[$\bullet$] If neither $u$ nor $v$ is a root node. In $H\backslash S$, for every vertex $w \in V(P_{ru})\backslash \{u\}$, there exists a path $P_{wr}$ such that $u,v \notin V(P_{wr})$ and for every vertex $x \in V(P_{vv_i})$, where $v_i \in D_1$ for some $1\leq i\leq l$ such that $P_{uv_i}$ exist in $T$, there exists a path from $P_{xv_i}$ such that $u,v \notin V(P_{xv_i})$. Since $deg_H(r) \geq 3$ and by the path $P$ in $H$, the graph $H\backslash S$ is connected.
\end{itemize} 

\item[Case 2.2:] $\{u,v\} \in E_{ca}$.

\begin{itemize}
\item[$\bullet$] If $deg_T(u)=1$ and $deg_T(v)=1$ then, the graph $T\backslash S$ is connected. Thus, the graph $H\backslash S$ is connected.

\item[$\bullet$] If either $deg_T(u)=1$ or $deg_T(v)=1$. $w.l.o.g$, assume that $deg_T(v)=1$. 
\begin{itemize}
\item[-] If $u,v \in B_i,$ for some $1 \leq i \leq l$. \\ 
Since $\forall ~ w \in V(H), deg_H(w) \geq 3$, every internal vertex of $P_{uv}$ in $T$ contributes degree 2 to the path $P_{uv}$ and the remaining degree to the vertices which does not belong to $V(P_{uv})$ (by \emph{Steps 10-37}). For every internal vertex $w \in V(P_{ru})$ of $T$, there exists a path from $P_{wr}$ such that $u,v \notin V(P_{wr})$. Since $deg_H(r) \geq 3$ and by the path $P$, the graph $H\backslash S$ is connected.

\item[-] If $u \in B_i$ and $v \in B_j$ for some $1 \leq i,j \leq l$ and $i \neq j$.\\
Since $deg_T(v)=1$, $H\backslash \{v\}$ is connected. For every internal vertex $w \in V(P_{ru})$ of $T$, there exists a path from $P_{wr}$ such that $u,v \notin V(P_{wr})$ and for every internal vertex $x \in V(P_{uv_i})$ of $T$, where $v_i \in D_1$ for some $1\leq i\leq l$ such that $P_{uv_i}$ exist in $T$, there exists a path from $P_{xv_i}$. Since $deg_H(r) \geq 3$ and by the path $P$, the graph $H\backslash S$ is connected.
\end{itemize}

\item[$\bullet$] If neither $deg_T(u)= 1$ nor $deg_T(v) = 1$. 
\begin{itemize}
\item[-] If $u,v \in B_i,$ for some $1 \leq i \leq l$.\\
 i.e., the edge $\{u,v\}$ is augmented by the \emph{Step 21} or the \emph{Step 29}. Thus, there exists an internal vertex $y \in V(P_{uv})$ in $T$ and one of the internal vertex in the path $P_{uv}$ is adjacent to a vertex in different branch, by the \emph{Steps 23-27} or by the \emph{Steps 32-36}. For every internal vertex $w \in V(P_{ru})$ of $T$, there exists a path $P_{wr}$ such that $u,v \notin V(P_{wr})$, since if $d_T(w) = 2$ then the vertex $w$ is augmented to a vertex in different branch by the least degree two vertex condition in \emph{Step 11} and if $d_T(w) \geq 3$ then there exists a path from $w$ to a vertex in different branch in $T$. For every internal vertex $x \in V(P_{uv_i})$ of $T$, where $v_i \in D_1$ for some $1\leq i\leq l$ such that $P_{uv_i}$ exist in $T$, there exists a path $P_{xv_i}$. Since $deg_H(r) \geq 3$ and by the path $P$, the graph $H\backslash S$ is connected.
 
\item[-] If $u \in B_i$ and $v \in B_j$ for some $1 \leq i,j \leq l$ and $i \neq j$.\\
i.e., the edge $\{u,v\}$ is augmented by the \emph{Steps 10-16}. For every internal vertex $w \in V(P_{ru})$ ($a \in V(P_{rv})$ ), there exists a path $P_{wr}$ ($P_{ar}$) such that $u,v \notin V(P_{wr})$ ($u,v \notin V(P_{ar})$) and for every internal vertex $x \in V(P_{uv_i})$ ($b \in V(P_{vv_i})$), where $v_i \in D_1$ for some $1\leq i\leq l$ such that $P_{uv_i}$ exist in $T$, there exists a path $P_{xv_i}$ ($P_{bv_i}$). Since $deg_H(r) \geq 3$ and by the path $P$, the graph $H\backslash S$ is connected.

\end{itemize}
\end{itemize} 

\item[Case 2.3:] $\{u,v\} \notin E(H)$.
\begin{itemize}
\item[$\bullet$] If $u=r$ or $v=r$
Without loss of generality, assume that $u=r$. Then $deg(u) = k$, $k \geq 3$. Since, all the degree one vertices are connected by a path $P$, the graph $H\backslash \{u\}$ is connected. Every internal vertex $w\in P_{uv}$ of $T$ contributes degree 2 to the path $P_{uv}$ and remaining to the vertices which are not in the path $P_{uv}$. Thus, the graph $H\backslash S$ is connected.
\item[$\bullet$] If $u,v \in B_i,$ for some $1 \leq i \leq l$.
Similar argument as in \emph{Case 2.2}.
\item[$\bullet$] If $u \in B_i$ and $v \in B_j$ for some $1 \leq i,j \leq l$ and $i \neq j$.
Similar argument as in \emph{Case 2.2}.
\end{itemize}
\end{description}
\end{description}
In all the above cases, the graph $H\backslash S$ is connected, which is a contradiction to the fact that $S$ is a vertex separator. Therefore, our assumption that there exists at least one minimal vertex separator in $H$ of size at most two is wrong. Hence, every minimal vertex separator of $H$ is of size at least 3.\\
Now, our claim is to prove that there exists at least one minimal vertex separator of size 3. By \emph{Lemma 2}, the degree of every vertex in $V(P)\backslash \{v_l\}$ is 3. The graph $H$ is 3-connected. $\hfill \qed$
\end{proof}

\begin{theorem}
For a tree $T$, the graph $H$ obtained from \emph{Algorithm 1} is 3-connected. Further, $H$ is obtained from $T$ by augmenting a minimum set of edges.
\end{theorem}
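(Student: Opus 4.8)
The plan is to derive the theorem as a corollary of the preceding lemmas by a single case split, since \emph{Algorithm 1} does nothing more than test whether $T$ is a path and then invoke the appropriate subroutine. We may assume $n \geq 4$, as a graph on at most three vertices cannot be $3$-connected and the problem is vacuous in that range; the subroutine lemmas are all stated under exactly this hypothesis.

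First I would establish $3$-connectivity. If $T$ is a path, then $H = \mathtt{Path\_Augmentation}(T)$ and Lemma~6 gives that $H$ is $3$-connected. If $T$ is a non-path tree, then $H = \mathtt{Non}$-$\mathtt{Path\_Augmentation}(T)$ and Lemma~7 gives that $H$ is $3$-connected. Since these two cases are exhaustive and mutually exclusive (a tree is a path iff $\Delta(T)\le 2$), $H$ is $3$-connected in all cases.

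Next I would establish minimality of $E_{ca}$. By Lemma~1, every tri-connectivity augmentation set for $T$ has at least $\lceil \frac{2l_1+l_2}{2}\rceil$ edges, where $l_1,l_2$ are computed from the input tree $T$. On the other hand, Lemma~4 (path case) and Lemma~5 (non-path case) show that the set $E_{ca}$ returned by \emph{Algorithm 1} has size exactly $\lceil \frac{2l_1+l_2}{2}\rceil$. Combining the matching lower and upper bounds, $|E_{ca}|$ equals the minimum possible size of a tri-connectivity augmentation set, so $H$ is obtained from $T$ by augmenting a minimum set of edges. This completes the proof.

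There is essentially no substantive obstacle here: the content has been discharged by Lemmas~1--7, and the only care needed is bookkeeping — checking that the two algorithmic cases cover all trees on $n\ge 4$ vertices and that the quantities $l_1,l_2$ appearing in the lower bound (Lemma~1) are the same ones counted in the exact-size lemmas (Lemmas~4 and~5), which they are since both refer to the degree-one and degree-two vertices of the original tree $T$. If anything, the mild subtlety worth a sentence is the degenerate regime $n<4$, where no augmentation makes $T$ $3$-connected and the statement should be read as applying to $n\ge 4$.
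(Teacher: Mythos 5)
Your proposal is correct and follows essentially the same route as the paper's own proof: a case split on whether $T$ is a path, invoking Lemmas~6 and~7 for 3-connectivity, Lemmas~4 and~5 for the exact edge count, and Lemma~1 for the matching lower bound. The only addition is your (reasonable) explicit remark about the degenerate regime $n<4$, which the paper leaves implicit.
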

\begin{proof}
The lower bound for the tri-connectivity augmentation of trees is $\lceil \frac{2l_1+l_2}{2} \rceil$ by \emph{Lemma 1}. If the tree $T$ is a path, \emph{Algorithm 1} calls \emph{Algorithm 2}, which converts the tree to a 3-connected graph $H$ by augmenting exactly $\lceil \frac{2l_1+l_2}{2} \rceil$ edges (by \emph{Lemma 4} and \emph{Lemma 6}). If the tree $T$ is a non-path tree, the \emph{Algorithm 1} calls \emph{Algorithm 3}, which converts the tree to a 3-connected graph $H$ by augmenting exactly $\lceil \frac{2l_1+l_2}{2} \rceil$ edges (by \emph{Lemma 5} and \emph{Lemma 7}). Thus, for a tree $T$ the graph obtained from \emph{Algorithm 1} is 3-connected. Further, $H$ is obtained by using a minimum connectivity augmentation set. Therefore, the claim follows. $\hfill \qed$
\end{proof}

\subsection{Implementation and Analysis of the algorithm}
Let $T$ be a tree with the vertex set $V(T)$ such that $\vert V(T) \vert =n$, with the edge set $E(T)$ such that $\vert E(T) \vert =m$ and $l$ be the number of leaves. The Algorithm $\mathtt{Path\_Augmentation()}$ takes $O(1)$ time in \emph{Step 2}, $O(n)$ time for \emph{Steps 3-8} and $O(1)$ time for \emph{Steps 9-12} of \emph{Algorithm 2}. Thus, the Algorithm $\mathtt{Path\_Augmentation()}$ takes $O(n)$ time. 

 The Algorithm $\mathtt{Non}$-$\mathtt{Path\_Augmentation()}$: Since, the level ordering can be implemented by the data structure $QUEUE$, the \emph{Step 3} takes $O(n)$ time, takes $O(ln)$ time for \emph{Steps 4-7} of \emph{Algorithm 3} and $O(n)$ time for the \emph{Step 8}. We implement the data structure QUEUE for the \emph{Steps 8-16}, which is used to keep track of marked and unmarked vertices in $W$. This process ends after visiting all vertices in $T$ and it takes $O(n^2)$ time for \emph{Steps 10-16} and the algorithm takes $O(n)$ time for \emph{Steps 18-38}. In total, the Algorithm $\mathtt{Non}$-$\mathtt{Path\_Augmentation()}$ takes $O(n^2)$ time. Therefore, \emph{Algorithm 1} takes $O(n^2)$ time. Thus, for a given tree, a minimum tri-connectivity augmentation set can be found in $O(n^2)$ time.

\subsection{Trace of the Algorithm (Algorithm 2)}

\begin{figure}[H]
\centering
\includegraphics[scale=0.40]{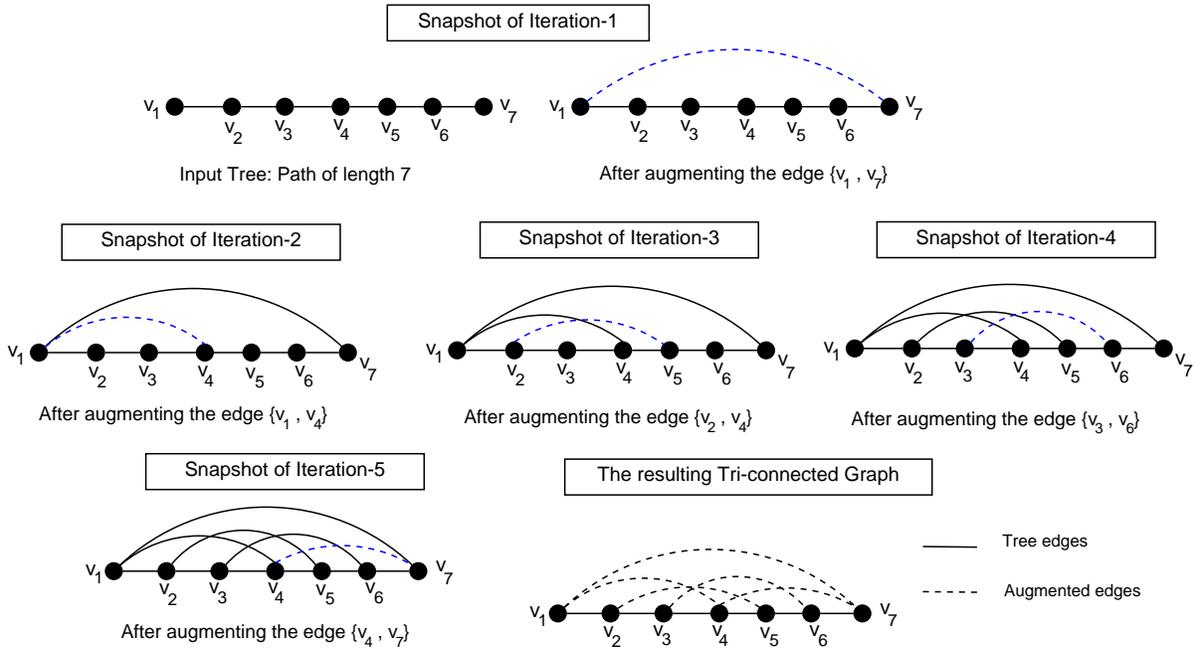}
\caption{Trace of Algorithm 2 when $n = 7$ }
\end{figure}

\subsection{Trace of the Algorithm (Algorithm 3)}

\begin{figure}[H]
\centering
\includegraphics[scale=0.35]{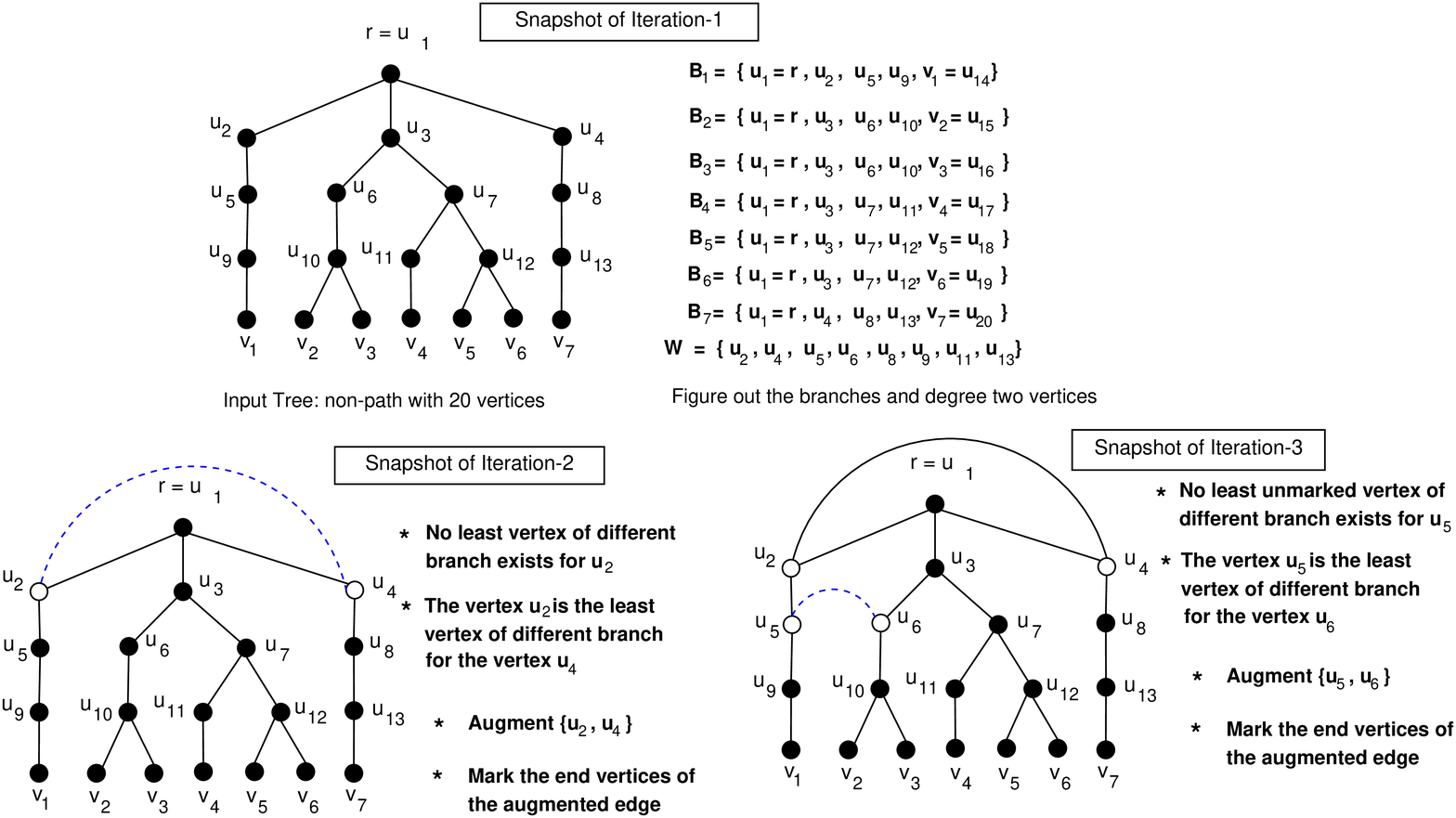}\\
\includegraphics[scale=0.37]{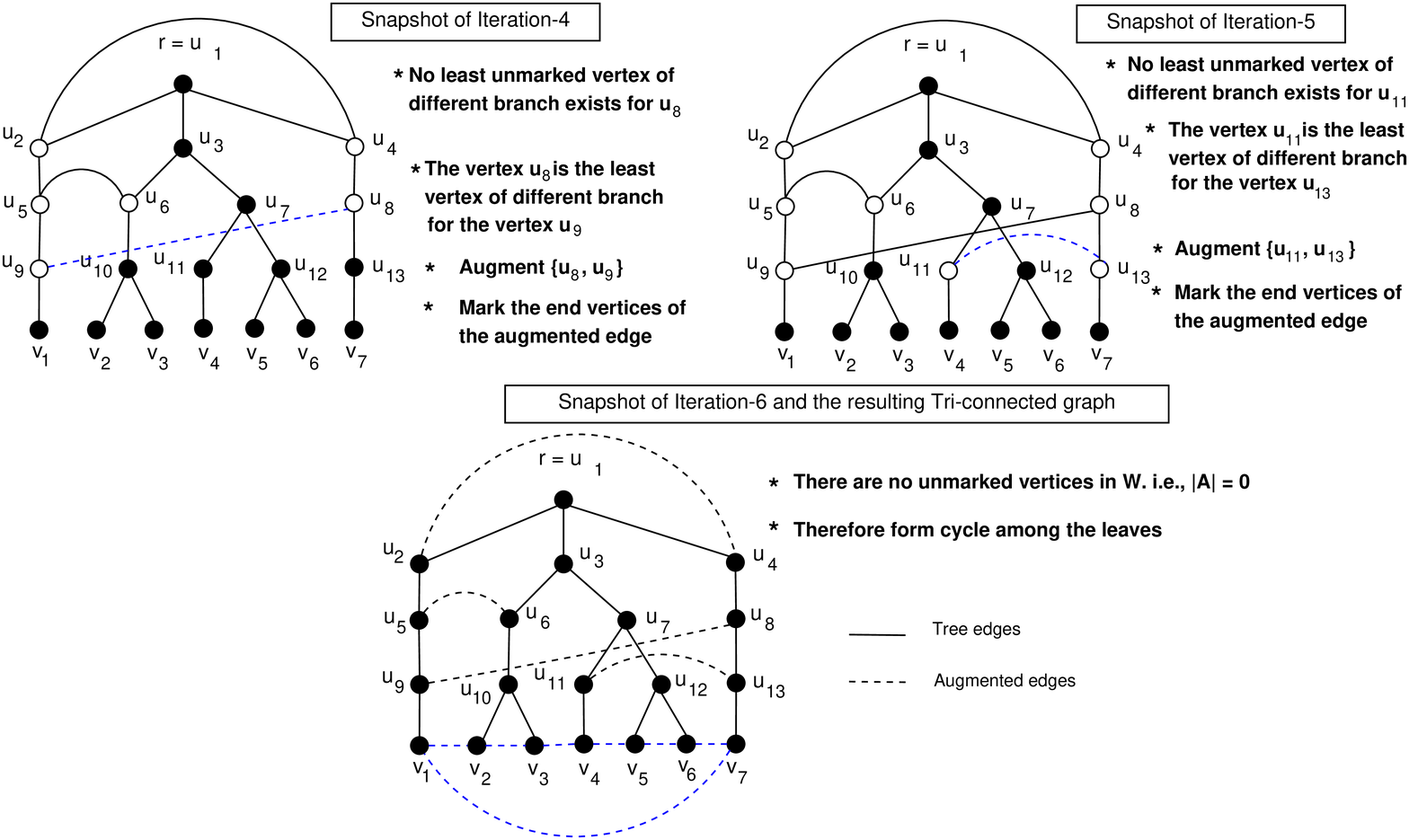}
\caption{An example for the tri-connectivity augmentation of a non-path tree, which satisfies the condition $\vert A \vert = 0$}
\end{figure}

\section{Conclusions and Future Directions}
In this paper, we have presented an algorithm for finding a minimum tri-connectivity augmentation set in trees. We believe that the approach can be extended to $1$-connected graph  with the help of block trees, biconnected component trees proposed in \cite{tarjan,nsn}. A logical extension of this work would be to look at $r$-connectivity augmentation of trees for any $r \geq 2$.

\end{document}